\def\l@subsection{\@tocline{2}{0pt}{2.5pc}{2.5pc}{}}%iets opgeschoven worden naar rechts
\def\chapter{\clearpage\thispagestyle{plain}\global\@topnum\z@ %beschikbare pagina beginnen
\@afterindenttrue \secdef\@chapter\@schapter}
\newtheorem{thmgl} {Theorem}    %globaal genummerd
\newtheorem{propgl}{Proposition}
\newtheorem{lemgl} {Lemma}
\newtheorem{cornn}{Corollary}
\theoremstyle{definition}
\newtheorem{remgl} {Remark}
\newtheorem{remsgl} [remgl]{Remarks}
\newcommand{\mf}{\mathfrak}
\newcommand{\mc}{\mathcal}
\newcommand{\mb}{\mathbb}
\newcommand{\nts}{\negthinspace}     %handig
\newcommand{\Nts}{\nts\nts}
\newcommand{\ov}{\overline}
\newcommand{\sm}{\setminus}         %verzamelingen
\newcommand{\ot}{\otimes}           %vectorruimten en modules
\newcommand{\Hom}{{\rm Hom}}        %Algebra algemeen
\newcommand{\Mor}{{\rm Mor}}
\newcommand{\Mat}{{\rm Mat}}
\newcommand{\Sym}{{\rm Sym}} %symmetrische groep
\newcommand{\tr}{{\rm tr}}
\newcommand{\g}{\mf{g}}
\newcommand{\h}{\mf{h}}
\let\ttie\t
\newcommand{\tie}[1]{{\let\t\ttie \ttie#1}}%\t requires a special treatment, because
\renewcommand{\t}{\mf{t}}  %it is defined recursively. This trick is due to Uwe L\"uck
\newcommand{\n}{\mf{n}}
\newcommand{\gl}{\mf{gl}}
\newcommand{\spl}{\mf{sl}}
\newcommand{\GL}{{\rm GL}}
\newcommand{\SL}{{\rm SL}}
\newcommand{\e}{\epsilon}
\newcommand{\ve}{\varepsilon}
\newcommand{\Stab}{{\rm Stab}}
\newcommand*\bmat[1]{\begin{bsmallmatrix}#1\end{bsmallmatrix}}
\def\vcdots{\vbox{\baselineskip4\p@ \lineskiplimit\z@
\kern3\p@\hbox{.}\hbox{.}\hbox{.}\Nts\nts\kern3\p@}}
\begin{document}

\title{Highest-weight vectors for the adjoint action of $\GL_n$ on polynomials, II}

\begin{abstract}
Let $G=\GL_n$ be the general linear group over an algebraically closed field $k$ and let $\g=\gl_n$ be its Lie algebra. Let $U$ be the subgroup of $G$ which consists of the upper unitriangular matrices. Let $k[\g]$ be the algebra of polynomial functions on $\g$ and let $k[\g]^G$ be the algebra of invariants under the conjugation action of $G$. For all weights
$\chi\in\mathbb Z^n$ with $\chi_{{}_2}\le0$ or $\chi_{{}_{n-1}}\ge0$ we give explicit bases for the $k[\g]^G$-module $k[\g]^U_\chi$ of highest weight vectors of weight $\chi$. We also give bases for the vector spaces $k[C]^U_\chi$ where $C$ is a nilpotent orbit closure.
This extends earlier results to a much bigger class of weights. To express our semi-invariants in terms of matrix powers we prove certain Cayley-Hamilton type identities.
\end{abstract}

\author[R.\ H.\ Tange]{Rudolf Tange}

\keywords{}
\thanks{2010 {\it Mathematics Subject Classification}. 13A50, 16W22, 20G05.}

\maketitle
\markright{\MakeUppercase{Highest weight vectors for the adjoint action of} $\GL_n$, II}

\section*{Introduction}
Let $\GL_n$ be the general linear group over an algebraically closed field $k$ and let $\gl_n$ be its Lie algebra.
Let $k[\gl_n]$ be the ring of polynomial functions on $\gl_n$, it is a $\GL_n$-module under the conjugation action.
In this paper we will give, for certain weights, explicit bases for the spaces of highest weight vectors in $k[\gl_n]$
as modules over the ring $k[\gl_n]^{\GL_n}$ of invariants. Our main result Theorem~\ref{thm.basis} extends Theorems~1 and
2 in \cite{T}. The method we use here is quite different: in \cite{T} we used evaluation at certain special
nilpotent matrices, in this paper we use certain Cayley-Hamilton type identities and a morphism from the nilpotent cone to
the variety of $n\times(n-1)$ matrices which intertwines the adjoint action of the upper triangular matrices with the left
regular action.

Assume $k=\mb C$. In \cite[Sect.~4]{T} a general construction is given to produce $k[\gl_n]^{\GL_n}$-module generators for the module
$k[\gl_n]^U_\lambda$ of highest weight vectors of weight $\lambda$. It amounts to applying a highest weight vector
$E_\lambda\in L_{\mb C}(\lambda)\subseteq\gl_n^{\ot t}$, $t$ depending on $\lambda$, to varying tuples of fundamental invariants.
Whenever this method yields a $k[\gl_n]^{\GL_n}$-module basis of $k[\gl_n]^U_\lambda$ it also yields a $k[\gl_n]^{\GL_n}$-module
basis of the whole $L_{\mb C}(\lambda)$-isotypic component of $k[\gl_n]$. This can be seen by replacing $E_\lambda$ by arbitrary basis
elements of $L_{\mb C}(\lambda)$. Since the highest weight vectors in this paper are in accordance with this construction
(see Remark~\ref{rems.invs}.3), this applies to all the weights we consider in this paper.
Finding tuples of fundamental invariants for which the above method yields a basis is for many weights an intriguing combinatorial
problem.

We briefly sketch some of the relevant background when $k=\mb C$. In \cite{Kos} Kostant showed that for any complex reductive group $G$ with Lie algebra
$\g$ the coordinate rings of the fibers of the adjoint quotient $\g\to\g\,/\nts/G$ are all isomorphic as $G$-modules to the space of harmonics $H$.
For one particular fiber, the nilpotent cone $\mc N$, we even have an isomorphism of graded $G$-modules $k[\mc N]\cong H$.
In \cite{Hes} a formula was given for the graded multiplicity of an irreducible $L(\chi)$ in $k[\mc N]$.

In the case of $\GL_n$ it is well-known that the graded multiplicity of an irreducible $L(\chi)$ in $k[\mc N]$ is given
by the Kostka polynomial $K_{\chi+r{\bf 1},r{\bf 1}}$, where $\bf 1$ is the all-one vector of length $n$ and $r$ is such
that $\chi+r{\bf 1}$ has all its components $\ge0$. Lascoux and Sch\"utzenberger defined the charge $c(T)$ of a semi-standard
tableau $T$ such that $K_{\lambda,\mu}(q)=\sum_Tq^{c(T)}$, where the sum is over all semi-standard tableau $T$ of shape $\lambda$
and weight $\mu$, see e.g. \cite[III.6]{Mac}.

Every dominant weight $\chi$ in the root lattice of $\GL_n$ can be written in the form $\chi=[\lambda,\mu]_n\stackrel{\rm def}{=}\lambda-\mu^{\rm rev}$,
where $\lambda$ and $\mu$ are partitions of the same number whose lengths add up to at most $n$ and $\mu^{\rm rev}$ denotes the reverse of $\mu$
considered as an $n$-tuple. Now let $\lambda$ and $\mu$ be partitions of the same number. Then the graded multiplicity of $L([\lambda,\mu]_n)$
in $k[\mc N]$ has a limit as $n\to\infty$, i.e. in a fixed degree the multiplicity is constant, independent of $n$, for $n$ sufficiently big.
See \cite{Stan} and \cite{B4}. In \cite{B1} formulas are given for the graded multiplicity of $L(\chi)$ in $k[\mc N]$ which separate the
dependence on $\lambda$ and $\mu$ and the dependence on $n$. Proofs of the results in \cite{B1} can be deduced from the arguments mentioned
there and the results in \cite{B2} and \cite{B4}. A generalisation of the theory of Kostka polynomials to arbitrary types is given
in \cite{B3}. The assumptions in \cite[Thm.~3.4]{B3} are now known to be unnecessary, see \cite[Thm.~2]{KLT}.

The paper is organised as follows. In Section~\ref{s.prelim} we set up the basic notation and recall some results concerning the adjoint
action of $\GL_n$ on $k[\gl_n]$. In Section~\ref{s.matid} we prove certain Cayley-Hamilton type matrix identities. In Section~\ref{s.semiinvs} we
prove our main result Theorem~\ref{thm.basis}. The main idea is to use a birational morphism $\varphi$ from the nilpotent cone to the variety
$\Mat_{n,n-1}$ of $n\times(n-1)$ matrices. The results from Section~\ref{s.matid} are used to relate a family of semi-invariants to another
family of semi-invariants which can be interpreted as pullbacks along $\varphi$ of certain well-known highest weight vectors in $k[\Mat_{n,n-1}]$.
In Section~\ref{s.semiinvs2} we give bases for the spaces of highest weight vectors in the coordinate rings of nilpotent orbit closures.
The morphism $\varphi$ above is an isomorphism between $\{A\in\mc N\,|\,d(A)\ne0\}$, $d\in k[\mc N]$ a certain $B$-semi-invariant,
and a special open subset of $\Mat_{n,n-1}$. In Section~\ref{s.d} we prove some properties of $d$ and the localisation $k[\mc N]^U[d^{-1}]$.
In particular, it is made clear how much $k[\mc N]^U$ is simplified by making $d$ invertible.

\section{Preliminaries}\label{s.prelim}

Throughout this paper $k$ is an algebraically closed field and $G=\GL_n$, $n\ge 2$, is the group of invertible $n\times n$-matrices.
Its natural module is $k^n$ and its Lie algebra is $\g=\gl_n$, the vector space of $n\times n$-matrices.
We denote the standard basis elements of $k^n$ by $e_1,\ldots,e_n$.
The set of nilpotent $n\times n$ matrices with entries in $k$ is called the {\it nilpotent cone} and it is denoted by $\mc N$.

The Borel subgroup of $G$ which consists of the invertible upper triangular matrices is denoted by $B$ and its unipotent
radical, which consists of the upper unitriangular matrices, by $U$. We denote by $H$ the maximal torus of $G$ which consist
of the invertible diagonal matrices. We will identify the character group of $H$ with $\mathbb Z^n$ by means of the isomorphism
which sends the character $S\mapsto S_{ii}$ of $H$ to the $i$-th standard basis element $\ve_i$ of $\mathbb Z^n$.
We will call the characters of $H$ {\it weights}. As is well-known, we have for any (rational) $H$-module $V$ a weight space
decomposition $V=\bigoplus_\chi V_\chi$, where $V_\chi=\{v\in V\,|\,\forall_{S\in H}S\cdot v=\chi(S)v\}$, and we call the
$\chi$ for which $V_\chi\ne 0$ the weights of $V$.

If $\chi$ is a weight, then we will also consider it as an element of $\mathbb Z^n$ and denote its components by
$\chi_{{}_1},\ldots,\chi_{{}_n}$. We will use additive notation for characters: $(\chi+\eta)(S)=\chi(S)\eta(S)$.
Any weight $\chi$ will also be considered as a character of $B$ by $\chi(SA)=\chi(S)$ for all $S\in H$ and $A\in U$.
A weight $\chi$ is called {\it dominant} if $\chi_{{}_1}\ge\cdots\ge\chi_{{}_n}$. For $\chi$ a weight we put
$|\chi|=\sum_{i=1}^n\chi_{{}_i}$. The {\it root lattice} is the set of weights with coordinate sum $0$.
We denote the all-zero and all-one tuple in $\mb Z^n$ by ${\bf 0}={\bf 0}_n$ and ${\bf 1}={\bf 1}_n$.

If $\lambda$ is a partition, then we denote its length by $l(\lambda)$ and we denote the sum of its parts by $|\lambda|$.
The partitions of length $\le n$ are also considered as weights by extending them with zeros to a tuple of length $n$.
The partition of length one with single part equal to $r$ is denoted
by $(r)$ and the partition of length $r$ with all parts equal to $1$ is denoted by $1^r$.

If $\chi$ is a dominant weight, then we denote the corresponding irreducible $\GL_n(\mathbb C)$-module of highest weight $\chi$
by $L_{\mb C}(\chi)$. As is well-known, the dual module $L_{\mb C}(\chi)^*$ is isomorphic to $L_{\mb C}(-\chi^{\rm rev})$,
where $\chi^{\rm rev}$ denotes the reversed tuple of $\chi$.

Let $K$ be a group. If $V$ is a $K$-module over $k$, then we denote the space of $K$-invariants by $V^K$.
If $V$ is an algebraic variety over $k$ we denote the algebra of regular functions on $V$ by $k[V]$.
If $K$ acts on $V$, then it also acts on $k[V]$ via $(g\cdot f)(x)=f(g^{-1}\cdot x)$ for $g\in K$ and $x\in V$.
If $V$ is a (rational) $G$-module and $\chi$ is a weight, then the elements of $V^U_\chi$ are called
{\it highest weight vectors of weight $\chi$} or {\it $B$-semi-invariants of weight $\chi$}.

Since $G$ acts on $\g$ and the nilpotent cone $\mc N$ by conjugation, it also acts on $k[\g]$ and $k[\mc N]$.
The algebra $k[\g]$ is a polynomial algebra in the matrix entry functions $x_{ij}$,
and we have $k[\g]^U_\chi\ne0$ if and only if $\chi$ is dominant and in the root lattice.
Furthermore, $\dim k[\mc N]^U_\chi=\dim L_{\mb C}(\chi)_{\bf 0}$ and $k[\g]^U_\chi$ is a free $k[\g]^G$-module of rank
$\dim L_{\mb C}(\chi)_{\bf 0}$. See \cite[Prop.~1]{T} for references and more explanation.

\section{Certain matrix identities}\label{s.matid}

For $m$ an integer $\ge0$ put $s_m(A)=\tr(\wedge^m(A))$ and $h_m(A)=\tr(S^m(A))$, where $\wedge^m(A)$ and
$S^m(A)$ are the $m$-th exterior and symmetric power of $A\in\gl_n$ and $\tr$ denotes the trace.
For $m$ an integer $\ge1$ put $p_m(A)=\tr(A^m)$. As is well-known $s_1,\ldots,s_n$ are algebraically
independent generators of $k[\gl_n]^{\GL_n}$. The same holds for $h_1,\ldots,h_n$.
Note that $s_0=h_0=1$, $s_1=h_1=p_1=\tr$, $s_n=\det$ and $s_m=0$ for $m>n$.
For $i,j\in\{1,\ldots,n\}$ denote the partial differentiation with respect to $x_{ij}$ by $\partial_{ij}$.
For brevity we will often denote an $n\times n$ matrix with entries $a_{ij}$ by $(a_{ij})_{ij}$ rather than
$(a_{ij})_{1\le i,j\le n}$\,.

The purpose of this section is to prove the matrix identities
\begin{align}
(-1)^m(\partial_{ji}s_{m+1})(A)_{ij}&=A^m+\sum_{k=1}^m(-1)^ks_k(A)A^{m-k}\,,\label{eq.main1}\\
(\partial_{ji}h_{m+1})(A)_{ij}&=A^m+\sum_{k=1}^mh_k(A)A^{m-k}\,,\label{eq.main2}\\
(\partial_{ji}p_{m+1})(A)_{ij}&=(m+1)A^m\,\label{eq.main3}
\end{align}
for $m\ge0$ and $A\in\gl_n$. We will use this in the proof of Theorem~\ref{thm.basis} in Section~\ref{s.semiinvs}
to show that one family of highest weight vectors coincides up to signs with another when restricted
to the nilpotent cone.

We give some comments on equation \eqref{eq.main1}.
For $m=n$ it is the Cayley-Hamilton identity.
For $m>n$ it follows by multiplying the Cayley-Hamilton identity by $A^{m-n}$.
Finally, the case $m=n-1$ can easily be deduced
from the Cayley-Hamilton identity by replacing $s_n(A)I$ by $A (\partial_{ji}s_n)(A)_{ij}$
(the second factor is just the ``cofactor matrix") and multiplying through by $A^{-1}$.

Note that equations~\eqref{eq.main1}, \eqref{eq.main2} are equivalent to the
recursive equations
\begin{align}
(\partial_{ji}s_{m+1})(A)_{ij}&=-A\,(\partial_{ji}s_m)(A)_{ij}+s_m(A)I\,,\label{eq.main1'}\\
(\partial_{ji}h_{m+1})(A)_{ij}&=A\,(\partial_{ji}h_m)(A)_{ij}+h_m(A)I\label{eq.main2'}
\end{align}
for $m\ge 0$.

By \cite[Sect.~I.1.2]{Mac} and the Chevalley Restriction Theorem we have the identity
\begin{equation*}
\sum_{k=0}^m(-1)^ks_kh_{m-k}=0\,,\eqno{(*)}
\end{equation*}
$m\ge1$, which says that for all $N\ge 0$ the matrices
$${((-1)^{i-j}s_{i-j})}_{0\le i,j\le N}\text{\quad and\quad}{(h_{i-j})}_{0\le i,j\le N}$$
are each others inverse, where we put $s_m=h_m=0$ for $m<0$.
From $(*)$ we deduce that equations~\eqref{eq.main1} and \eqref{eq.main2} are also equivalent to resp.
\begin{align}
(-1)^mA^m&=(\partial_{ji}s_{m+1})(A)_{ij}+\sum_{k=1}^m(-1)^kh_k(A)(\partial_{ji}s_{m+1-k})(A)_{ij}\text{\quad and}\label{eq.main1''}\\
A^m&=(\partial_{ji}h_{m+1})(A)_{ij}+\sum_{k=1}^m(-1)^ks_k(A)(\partial_{ji}h_{m+1-k})(A)_{ij}\,\label{eq.main2''}
\end{align}
for $m\ge0$.

\begin{thmgl}\label{thm.matid}
Equations ~\eqref{eq.main1}-\eqref{eq.main2''} hold.
\end{thmgl}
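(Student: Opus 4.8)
The plan is to prove only the three basic identities \eqref{eq.main1}, \eqref{eq.main2} and \eqref{eq.main3}; the remaining four equations then follow, since \eqref{eq.main1'} and \eqref{eq.main1''} are equivalent to \eqref{eq.main1} and \eqref{eq.main2'}, \eqref{eq.main2''} are equivalent to \eqref{eq.main2}, as already explained in the text. As a bookkeeping device I would assemble all the identities of one family into a single generating-function identity in a variable $z$, working in the matrix ring $\Mat_n\bigl(k[x_{ij}][[z]]\bigr)$, where $\det(I+zA)$ and $\det(I-zA)$ are units because their constant terms equal $1$, so that $(I\pm zA)^{-1}$ makes sense as a power series and all manipulations are honest identities before comparison of coefficients.

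For \eqref{eq.main1} I would start from the two standard facts $\det(I+zA)=\sum_{m\ge0}s_m(A)z^m$ and Jacobi's formula, which here reads $\partial_{ji}\det(I+zA)=z\,\mathrm{adj}(I+zA)_{ij}$: indeed $\partial M_{ab}/\partial x_{ji}=z\,\delta_{aj}\delta_{bi}$ for $M=I+zA$ and $\partial\det M/\partial M_{ab}=\mathrm{adj}(M)_{ba}$. Assembling the matrix whose $(i,j)$ entry is $\partial_{ji}\det(I+zA)$ gives
$$\sum_{m\ge0}(\partial_{ji}s_m)(A)_{ij}\,z^m=z\,\mathrm{adj}(I+zA)=z\,\det(I+zA)\,(I+zA)^{-1}.$$
Expanding $\det(I+zA)=\sum_m s_m(A)z^m$ and $(I+zA)^{-1}=\sum_{l\ge0}(-1)^lA^lz^l$ and comparing the coefficient of $z^{m+1}$ yields $(\partial_{ji}s_{m+1})(A)_{ij}=\sum_{l=0}^m(-1)^l s_{m-l}(A)A^l$; multiplying by $(-1)^m$ and re-indexing by $k=m-l$ gives exactly \eqref{eq.main1}.

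The identity \eqref{eq.main2} is entirely parallel, starting instead from $\det(I-zA)^{-1}=\sum_{m\ge0}h_m(A)z^m$. Differentiating $(\det(I-zA))^{-1}$ and using $\mathrm{adj}(I-zA)=\det(I-zA)(I-zA)^{-1}$ collapses the factor $\det(I-zA)^{-2}\,\mathrm{adj}(I-zA)$ to $\det(I-zA)^{-1}(I-zA)^{-1}$, so that the assembled matrix equals $z\bigl(\sum_a h_a(A)z^a\bigr)\bigl(\sum_l A^lz^l\bigr)$; the coefficient of $z^{m+1}$ is $\sum_{k=0}^m h_k(A)A^{m-k}$, which is \eqref{eq.main2}. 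For \eqref{eq.main3} I would argue directly: by the Leibniz rule applied to the $m+1$ factors of $A^{m+1}$ together with cyclicity of the trace, $\partial_{ji}\tr(A^{m+1})=(m+1)\tr(A^mE_{ji})=(m+1)(A^m)_{ij}$, where $E_{ji}$ is the matrix unit, so the assembled matrix is $(m+1)A^m$.

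I expect the only real care to be needed in the bookkeeping of the generating-function step: keeping the transposed indexing $\partial_{ji}(\cdots)_{ij}$ consistent with Jacobi's formula and its sign, and justifying the manipulations as genuine identities in $\Mat_n\bigl(k[x_{ij}][[z]]\bigr)$ before comparing coefficients and evaluating at $A$. Everything else is a routine extraction of the coefficient of $z^{m+1}$. As an alternative to the generating-function argument, one could note that the matrix $(\partial_{ji}f)(A)_{ij}$ is conjugation-equivariant whenever $f$ is conjugation-invariant, reduce by Zariski density of the diagonalisable matrices to the case of diagonal $A$, and verify the three identities as the standard symmetric-function recursions for the $e_m$, the $h_m$ and the power sums; but the generating-function route avoids any density argument.
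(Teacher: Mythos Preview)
Your argument is correct. The generating-function computation is clean: Jacobi's formula combined with the invertibility of $I\pm zA$ in $\Mat_n\bigl(k[x_{ij}][[z]]\bigr)$ gives the identities \eqref{eq.main1} and \eqref{eq.main2} by coefficient extraction, and your derivation of \eqref{eq.main3} via $d\,\tr(A^{m+1})=(m+1)\tr(A^m\,dA)$ is the standard one. The index bookkeeping with $\partial_{ji}$ versus the $(i,j)$-entry is handled correctly.

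The paper, however, takes the alternative route you mention at the end. It observes that both sides of \eqref{eq.main3}, \eqref{eq.main1'}, \eqref{eq.main2'} are $G$-equivariant morphisms $\g\to\g$, invokes the injectivity of the generalised Chevalley restriction map $\Mor_G(\g,\g)\to\Mor_W(\h,\h)$ (density of semisimple elements), and then checks the three resulting identities for symmetric polynomials in $x_1,\ldots,x_n$: namely $\partial_i\pi_{m+1}=(m+1)x_i^m$, $\partial_i\sigma_{m+1}=-x_i\partial_i\sigma_m+\sigma_m$, and $\partial_i\eta_{m+1}=x_i\partial_i\eta_m+\eta_m$. Your approach has the advantage of being entirely algebraic and characteristic-free without appealing to density or to the equivariance of the two sides; the paper's approach trades the matrix computation for a short combinatorial verification once the restriction step is granted. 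It is worth noting that the paper's acknowledgement credits the referees for suggesting the restriction argument, and the commented-out original proof in the source is much closer in spirit to yours (direct differentiation for \eqref{eq.main3}, then an induction using Newton's identity for \eqref{eq.main2}).
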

\begin{proof}
Because of the preceding remarks it suffices to prove \eqref{eq.main3}, \eqref{eq.main1'} and \eqref{eq.main2'}.
Note that both sides of these equations are $G$-equivariant morphisms from $\g$ to $\g$.
% The map f\mapsto df:\g\to\Mor(\g,\g^*)=k[\g]\ot\g^* is G-equivariant, so ds_m\in\Mor_G(\g,\g^*)
% Now use the G-equivariant isomorphism \g\cong\g^* given by the trace form (E_{ij} corresponds to x_{ji})
If $M$ is a $G$-module, then we have a ``generalised Chevalley restriction map" $\Mor_G(\g,M)\to\Mor_W(\h,M^H)$, where
$\h=\g^H$ is the vector space of diagonal matrices and $W$ is the symmetric group of degree $n$.
This map is injective by the density of the semisimple elements.\footnote{This map can be defined for any reductive group.
In \cite{Br} it is determined when this map is surjective.}

So it suffices to prove the identities
\begin{align*}
\partial_i\pi_{m+1}&=(m+1)x_i^m\,,\\
\partial_i\sigma_{m+1}&=-x_i\partial_i\sigma_m+\sigma_m\quad\text{and}\\
\partial_i\eta_{m+1}&=x_i\partial_i\eta_m+\eta_m
\end{align*}
for all $i\in\{1,\ldots,n\}$, where $x_i=x_{ii}$, $\partial _i=\partial_{ii}$ and $\pi_m$, $\sigma_m$ and $\eta_m$ are the power sum and the elementary and complete
symmetric functions of degree $m$. The first identity is obvious. To prove the second identity we simply observe that $x_i\partial_i\sigma_m$ is the sum of the monomials
in $\sigma_m$ that involve $x_i$ and $\partial_i\sigma_{m+1}$ is the sum of the monomials in $\sigma_m$ that don't involve $x_i$.
To prove the third identity we calculate in the notation of \cite{Mac}
$$x_i\partial_i\eta_m=x_i\sum_{|\alpha|=m,\alpha_i>0}\alpha_ix^{\alpha-\e_i}=\sum_{|\alpha|=m}\alpha_ix^\alpha\quad\text{and}$$
$$\partial_i\eta_{m+1}=\sum_{|\alpha|=m+1, \alpha_i>0}\alpha_ix^{\alpha-\e_i}=\sum_{|\alpha|=m}(\alpha_i+1)x^\alpha\,,$$
where $\alpha$ runs over $\mb N^n$. From this the third identity immediately follows.

\begin{comment}%old proof
We note that equation~\eqref{eq.main1''} follows from \eqref{eq.main2''} by differentiating (*) with $m$ replaced by $m+1$.
So, in view of our previous remarks, it suffices to prove \eqref{eq.main2} and \eqref{eq.main3}. We clearly may assume that $k=\mathbb C$.

First we prove \eqref{eq.main3}. We use the ring $k[\delta]$ of dual numbers, $\delta^2=0$.
We have $(A+\delta X)^{m+1}-A^{m+1}=\delta(XA^m+AXA^{m-1}+\cdots+A^mX)$.
We get $(d_Ap_{m+1})(X)=\tr(XA^m+AXA^{m-1}+\cdots+A^mX)=(m+1)\tr(A^mX)$, since the factors in the trace of a product can be cyclically
permuted without changing the value. So $\partial_{ji}p_{m+1}(A)_{ij}=(d_Ap_{m+1})(E_{ji})_{ij}=(m+1)\tr(A^mE_{ji})_{ij}=(m+1)A^m$.

Now we prove \eqref{eq.main2} by induction. Assume it holds for all $k<m$.
Differentiating the identity (see \cite[Sect.~I.1.2]{Mac})
\begin{equation}\label{eq.ph}
mh_m=\sum_{k=1}^mp_kh_{m-k}\,.
\end{equation}
with $m$ replaced by $m+1$, we get
\begin{align*}
(m+1)(\partial_{ji}&h_{m+1})(A)_{ij}=\\
\sum_{k=1}^{m+1}h_{m+1-k}(A)(\partial_{ji}p_k)(A)_{ij}&+\sum_{k=1}^{m+1}p_k(A)(\partial_{ji}h_{m+1-k})(A)_{ij}=\\
\sum_{k=1}^{m+1}kh_{m+1-k}(A)A^{k-1}&+\sum_{k=1}^m\sum_{l=0}^{m-k}p_k(A)h_{m-k-l}(A)A^l=\\
\sum_{k=0}^m(k+1)h_{m-k}(A)A^k&+\sum_{l=0}^{m-1}\sum_{k=1}^{m-l}p_k(A)h_{m-l-k}(A)A^l=\\
\sum_{k=0}^m(k+1)h_{m-k}(A)A^k&+\sum_{l=0}^{m-1}(m-l)h_{m-l}(A)A^l=\\
(m+1)\sum_{k=0}^m&h_{m-k}(A)A^k
\end{align*}
by \eqref{eq.main3}, the induction hypothesis and \eqref{eq.ph} with $m$ replaced by $m-l$.
\end{comment}
\end{proof}
\begin{cornn}
We have
$$(-1)^m(\partial_{ji}s_{m+1})(A)_{ij}=A^m=(\partial_{ji}h_{m+1})(A)_{ij}$$
for all $A\in \mc N$.
\end{cornn}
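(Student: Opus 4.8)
The plan is to specialise the two main identities of Theorem~\ref{thm.matid} to a nilpotent matrix and to observe that on $\mc N$ every correction term drops out. Concretely, equations~\eqref{eq.main1} and \eqref{eq.main2} read
\begin{align*}
(-1)^m(\partial_{ji}s_{m+1})(A)_{ij}&=A^m+\sum_{k=1}^m(-1)^ks_k(A)A^{m-k}\,,\\
(\partial_{ji}h_{m+1})(A)_{ij}&=A^m+\sum_{k=1}^mh_k(A)A^{m-k}\,,
\end{align*}
so it will be enough to prove that $s_k(A)=0$ and $h_k(A)=0$ for every $k\ge1$ and every $A\in\mc N$. Substituting these vanishings into the two displayed identities kills both sums and leaves precisely $A^m$ on each right-hand side, which is the assertion.

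For the elementary invariants the vanishing is immediate: since $A$ is nilpotent its characteristic polynomial is $t^n$, whence $s_k(A)=0$ for $1\le k\le n$, and $s_k=0$ identically for $k>n$. Equivalently, $\mc N$ is by definition the common zero locus of $s_1,\dots,s_n$. For the complete symmetric invariants I would invoke the identity $(*)$, namely $\sum_{k=0}^m(-1)^ks_kh_{m-k}=0$ for $m\ge1$. Since $s_0=1$ this rearranges to the recursion $h_m=\sum_{k=1}^m(-1)^{k-1}s_kh_{m-k}$, which exhibits each $h_m$ (for $m\ge1$) as a polynomial in $s_1,\dots,s_m$ with no constant term. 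Evaluating at $A\in\mc N$ and using $s_k(A)=0$ for $k\ge1$ then gives $h_m(A)=0$ by a straightforward induction on $m$, the base case being $h_1=s_1$. Alternatively one may argue directly that $h_k$, like $s_k$, is a symmetric function of the eigenvalues of $A$, all of which are zero.

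There is essentially no obstacle here, as the statement is a degeneration of identities already proved. The only point requiring a word of justification is the vanishing of the $h_k$ on $\mc N$, and this is handled cleanly by the Newton-type relation $(*)$ recorded above; everything else is direct substitution.
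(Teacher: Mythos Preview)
Your proof is correct and follows exactly the paper's approach: specialise \eqref{eq.main1} and \eqref{eq.main2} to $A\in\mc N$ and use that $\mc N$ is the common zero locus of $s_1,\ldots,s_n$ and of $h_1,\ldots,h_n$. The only difference is that you spell out the vanishing of the $h_k$ via the recursion $(*)$, whereas the paper simply asserts it.
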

\begin{proof}
This follows from \eqref{eq.main1} and \eqref{eq.main2}, since $\mc N$ is the zero locus of $s_1,\ldots,s_n$ and
also of $h_1,\ldots,h_n$.
\end{proof}

\begin{remsgl}\hfil\break
1. The RHS of \eqref{eq.main1} and \eqref{eq.main2} is, up to multiplication by a constant, the same as
the RHS of \cite[(3.9),(3.11)]{IOP} at $q=1$, except that the sum is there up to $m-1$
rather than $m$. In view of this, the LHS of these equations at $q=1$ should be equal to
$(\partial_{ji}s_{m+1})(A)_{ij}-s_m(A)I=-A\,(\partial_{ji}s_m)(A)_{ij}$
and $(\partial_{ji}h_{m+1})(A)_{ij}-h_m(A)I=A\,(\partial_{ji}h_m)(A)_{ij}$,
up to multiplication by a constant. I do not know an easy direct proof of this.
The LHS of \eqref{eq.main1} and \eqref{eq.main2} is %much
simpler and also what is needed for our purposes.\\
2.\ By \cite[Thm.~1]{T} the $k[\g]^G$-module $k[\g]^U_{\e_1-\e_n}\cong \Hom_G(\spl_n,k[\g])$ is free
with basis $(\partial_{1n}s_2,\ldots,\partial_{1n}s_n)$. So the $k[\g]^G$-module $\Mor_G(\g,\spl_n^*)\cong\Hom_G(\spl_n,k[\g])$
is free with $(ds_2|_{\spl_n},\ldots,ds_n|_{\spl_n})$ as a basis. From this we immediately deduce that
the $k[\g]^G$-module $\Mor_G(\g,\g^*)\cong\Hom_G(\g,k[\g])$ is free with basis $(ds_1,\ldots,ds_n)$.
Combining this with equation~\eqref{eq.main1} we now deduce that the morphisms $A\mapsto A^i$,
$0\le i\le n-1$, form a basis of the $k[\g]^G$-module $\Mor_G(\g,\g)\cong\Mor_G(\g,\g^*)$.
The fact that these morphisms span also follows immediately from the much more general \cite[Prop.~6.5]{KP}
which is also valid in positive characteristic by \cite{Don2}.
\begin{comment}
2. The identity \eqref{eq.main1'} is equivalent to the identity
$$d_As_{m+1}(X)=-d_As_m(XA)+s_m(A)\tr(X)$$ for all $X,A\in\gl_n$,
where $d_A$ denotes the differential at $A$.
It seems that neither this identity nor \eqref{eq.main1'} are
easy to prove directly.
\end{comment}
\end{remsgl}

\section{semi-invariants}\label{s.semiinvs}

In this section we will prove our main result Theorem~\ref{thm.basis}.
The idea is to use the morphism $\varphi$ from Lemma~\ref{lem.varphi} below. Loosely speaking it intertwines the
conjugation action of $B$ on $\mc N$ and the left regular action of $B$ on $n\times(n-1)$ matrices.
The $U$-invariants for the left regular action on matrices are, as is well-known, certain special
elements from the bideterminant basis, see \cite{DRS}. Pulling these back
along the morphism $\varphi$ gives us the required basis for the spaces of
highest weight vectors for the adjoint action.

We now introduce some further notation for partitions and weights, and some notation for tableaux and bideterminants.
The {\it shape} of a partition $\lambda$ is the set $$\{(i,j)\,|\,1\le j\le\lambda_i, 1\le i\le l(\lambda)\}\,.$$
We sometimes identify a partition with its shape.
The {\it transpose} or {\it conjugate} $\lambda'$ of a partition $\lambda$ with shape $S$ is the partition
with shape $\{(i,j)\,|\,(j,i)\in S\}$.
For $\lambda,\mu\in\mb Z^n$ we put $[\lambda,\mu]=\lambda-\mu^{\rm rev}$ where $\mu^{\rm rev}$ is the reversed tuple of $\mu$.
It is easy to see that for any dominant weight $\chi\in\mb Z^n$ there exists unique partitions $\lambda$ and $\mu$ with
$l(\lambda)+l(\mu)\le n$ and $\chi=[\lambda,\mu]$.

Let $I$ be a set of integers. A {\it tableau of shape $\lambda$ with entries in $I$} is a function from
the shape of $\lambda$ to $I$. If $I$ is not explicitly given, then $I=\{1,\ldots,n\}$.
The notion of row and column of a tableau and the transpose of a tableau are defined in the same way as for matrices.
We define the {\it content} or {\it weight} of a tableau $T$ to be $\sum\ve_{T(i,j)}$, where we sum over the $(i,j)$
in the shape of $\lambda$. So the $i^{\rm th}$ component of the content of $T$ is the number of times that $i$ occurs in $T$.
We say that a tableau is {\it semi-standard} if its entries are strictly
increasing down the columns and weakly increasing in the rows from left to right.
Elsewhere in the literature the term standard is used rather than semi-standard
and in \cite{Mac} all tableaux are semi-standard and standard has another meaning.
If $\lambda$ and $\mu$ are partitions, then a {\it rational tableau} of shape $(\lambda,\mu)$ with entries in $\{1,\ldots,m\}$ is a pair of
tableaux $(S,T)$ with entries in $\{1,\ldots,m\}$ where $S$ has shape $\lambda$ and $T$ has shape $\mu$.
If $S$ has weight $\nu$ and $T$ has weight $\eta$, then the rational tableau $(S,T)$ has weight $\nu-\eta$.
A rational tableau with entries in $\{1,\ldots,m\}$ is called semi-standard if
$S$ and $T$ are semi-standard and $|\{j\in S^1\,|\,j\le i\}\,|+|\{j\in T^1\,|\,j\le i\}\,|\le i$ for all $i\in\{1,\ldots,m\}$,
where $S^1$ and $T^1$ denote the first columns of $S$ and $T$. Rational tableaux are a convenient
combinatorial tool to deal with representations of $\GL_n$ that are not polynomial.
they are not related to the bideterminants below.

Denote the variety of $n\times m$-matrices by $\Mat_{n,m}$.
If $S=\bmat{s_1\vspace{-.1cm}\\\vdots\\s_l}$, $s_i\in\{1,\ldots,n\}$ and $T=\bmat{t_1\vspace{-.1cm}\\\vdots\\t_l}$,
$t_i\in\{1,\ldots,m\}$, are one-column tableaux of the same length $l$, then we define
$(S\,|\,T)\in k[\Mat_{n,m}]=k[(x_{ij})_{1\le i\le n,1\le j\le m}]$ to
be the determinant
\begin{equation*}
\det\big( (x_{s_i\,t_j})_{1\le i,j\le l}\big)\, .
\end{equation*}
If $S$ and $T$ are arbitrary tableaux of the same shape $\lambda$ with entries in $\{1,\ldots,n\}$ and $\{1,\ldots,m\}$
and with columns $S^i$ and $T^i$, then we define the
{\it bideterminant} $(S\,|\,T)$ to be the product of the determinants $(S^i\,|\,T^i)$, $1\le i\le\lambda_1$.
Recall that $\GL_n\times\GL_m$ acts on $k[\Mat_{n,m}]$ by $\big((P,Q)\cdot f\big)(A)=f(P^{-1}AQ)$.
If $S$ has weight $\mu$ and $T$ has weight $\nu$, then $(S\,|\,T)$ is an $H\times  H$-weight vector of weight $(-\mu,\nu)$.
In \cite{DeCEP} bideterminants are associated to pairs of rows rather than pair of columns. So
our bideterminant $(S\,|\,T)$ is equal to their bideterminant $(S'\,|\,T')$, where $S'$ and $T'$ denote the
transpose of $S$ and $T$. A tableau is semi-standard in our sense if and only if it its transpose is standard
in the sense of \cite{DeCEP}. Finally, we define, for $\lambda$ a partition, the {\it anti-canonical tableau}
$T_\lambda$ of shape $\lambda$ to be the tableau whose $i$-th column consists of the integers
$n-\lambda'_i+1,n-\lambda'_i+2,\ldots,n$. Its weight is $-\lambda^{\rm rev}$.

Now we introduce four families of highest weight vectors in $k[\gl_n]$ for the conjugation action of $\GL_n$.
For $T=\bmat{t_1\vspace{-.1cm}\\\vdots\\t_l}$ a one-column tableau of length $l$ with entries in $\{1,\ldots,n-1\}$
we define the following functions on $\gl_n$.
\begin{equation*}
\begin{split}
u_T&=\det\big((\partial_{1i}s_{t_j+1})_{n-l+1\le i\le n,\, 1\le j\le l}\big),\\
v_T&=\det\big((\partial_{in}s_{t_j+1})_{1\le i\le l,\, 1\le j\le l}\big),\\
\tilde u_T(A)&=\det\big(A^{t_1}(e_1)|\cdots|A^{t_l}(e_1)\big)_{l\ge}\text{\quad and}\\
\tilde v_T(A)&=\det\big(A'^{t_1}(e_n)|\cdots|A'^{t_l}(e_n)\big)_{l\le}\,,
\end{split}
\end{equation*}
where the $s_i$ and $\partial_{ij}$ are as defined at the beginning of Section~\ref{s.matid}, the subscripts ``$l\ge$" and ``$l\le$" mean
that we take the last resp. first $l$ rows, and $A'$ denotes the transpose of $A$.
If $T$ has a repeated entry, then the elements $u_T$ and $v_T$ are zero. Otherwise they are up to sign equal to the elements
$u_{l,\{t_1+1,\ldots,t_l+1\}}$ and $v_{l,\{t_1+1,\ldots,t_l+1\}}$ from \cite[Thm.~1(ii)]{T}.
From the proof of \cite[Thm.~1(ii)]{T} it now follows that $u_{T}$ is a homogeneous $B$-semi-invariant of degree $\sum_{i=1}^lt_i$ and weight
$[(l),1^l]$.\footnote{The argument in the proof of \cite[Thm.~1(ii)]{T} showing that the $u_{t,I}$ are $B$-semi-invariants of weight
$\lambda^t$ should have appeared at the beginning of that proof.}
A simple computation shows that the same holds for $\tilde u_{T}$.
Similarly, $v_{T}$ and $\tilde v_T$ are homogeneous $B$-semi-invariants of degree $\sum_{i=1}^lt_i$ and weight $[1^l,(l)]$.

If $T$ is an arbitrary tableau of shape $\lambda$ with entries in $\{1,\ldots,n-1\}$ and with columns $T^i$, then we define $u_T$, $v_T$, $\tilde u_T$ and
$\tilde v_T\in k[\gl_n]$ to be the product of, respectively, the determinants $u_{T^i}$, $v_{T^i}$, $\tilde u_{T^i}$ and $\tilde v_{T^i}$, $1\le i\le\lambda_1$. By the above $u_T$ and $\tilde u_T$ are homogeneous $B$-semi-invariants of degree
$|T|=\sum_{(i,j)}T(i,j)$, where we sum over the shape of $\lambda$, and weight $[(|\lambda|),\lambda]$.
Similarly, $v_{T}$ and $\tilde v_T$ are homogeneous $B$-semi-invariants of degree $|T|$ and weight $[\lambda,(|\lambda|)]$.

Finally, if $T$ is the one-column tableau ${\bmat{1\vspace{-.1cm}\\\vdots\\n-1}}$, then $u_T=\pm\tilde u_T$ by equation \eqref{eq.main1}.
We denote this $u_T$ by $d$. For general $u_T$ and $\tilde u_T$ we only have such an equality as functions on $\mc N$.
By the above, $d$ has degree $\frac{1}{2}n(n-1)$ and weight $[(n-1),1^{n-1}]$.

In the remainder of this section we consider the nilpotent cone $\mc N$ as a $\GL_n$-variety via conjugation and we consider
the variety of $n\times(n-1)$ matrices $\Mat_{n,n-1}$ as a $\GL_n$-variety via left multiplication.

\begin{lemgl}\label{lem.varphi}
Let $\varphi:\mc N\to\Mat_{n,n-1}$ be the morphism $A\mapsto\big(A(e_1)|\cdots|A^{n-1}(e_1)\big)$.
Then the following hold.
\begin{enumerate}[{\rm(i)}]
\item The morphism $\varphi$ is birational.
\item $\varphi(SAS^{-1})=S\varphi(A)$ for all $S\in\Stab_{G}(e_1)$ and all $A\in\mc N$, and
if $f\in k[\Mat_{n,n-1}]$ is an $H$-weight vector of weight $\chi$,
then $f\circ\varphi\in k[\mc N]$ is an $H$-weight vector of weight $\chi-|\chi|\ve_1$.
\end{enumerate}
\end{lemgl}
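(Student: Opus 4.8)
The plan is to dispose of the formal statement (ii) by a direct computation and to prove birationality in (i) by writing down an explicit rational inverse that reconstructs a nilpotent matrix from its cyclic chain based at $e_1$.

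For (ii) I would argue as follows. If $S\in\Stab_G(e_1)$ then $S^{-1}e_1=e_1$, so the $i$-th column of $\varphi(SAS^{-1})$ equals $(SAS^{-1})^ie_1=SA^iS^{-1}e_1=SA^ie_1$, which is exactly the $i$-th column of $S\varphi(A)$; this gives the equivariance. Running the same computation for a general diagonal $S\in H$, where now $S^{-1}e_1=s_1^{-1}e_1$, yields the refined identity $\varphi(SAS^{-1})=s_1^{-1}S\,\varphi(A)$. Recalling that $H$ acts on $k[\mc N]$ by $(S\cdot g)(A)=g(S^{-1}AS)$ and on $k[\Mat_{n,n-1}]$ by $(S\cdot f)(M)=f(S^{-1}M)$, I then compute $\big(S\cdot(f\circ\varphi)\big)(A)=f\big(\varphi(S^{-1}AS)\big)=f\big(s_1S^{-1}\varphi(A)\big)$. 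Writing $s_1S^{-1}=(s_1^{-1}S)^{-1}$ and using that $f$ has weight $\chi$ gives $f\big(s_1S^{-1}\varphi(A)\big)=\chi(s_1^{-1}S)\,f(\varphi(A))$. Since $\chi$ is a character and $\chi(cI)=c^{|\chi|}$, one has $\chi(s_1^{-1}S)=s_1^{-|\chi|}\chi(S)=(\chi-|\chi|\ve_1)(S)$, so $f\circ\varphi$ has weight $\chi-|\chi|\ve_1$.

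For (i) the key idea is to invert $\varphi$ by reconstructing the nilpotent operator from the chain $e_1\mapsto Ae_1\mapsto\cdots\mapsto A^{n-1}e_1\mapsto0$. Given $M=(m_1|\cdots|m_{n-1})\in\Mat_{n,n-1}$, form the $n\times n$ matrix $(e_1|M)$ and let $V\subseteq\Mat_{n,n-1}$ be the nonempty open set where $\det(e_1|M)\ne0$. On $V$ define the morphism $\psi(M)=(M|0)(e_1|M)^{-1}$, where $(M|0)$ is $M$ with a zero column appended; its entries are regular on $V$ by Cramer's rule. The defining relation $\psi(M)(e_1|M)=(M|0)$ says that $B:=\psi(M)$ sends $e_1\mapsto m_1\mapsto\cdots\mapsto m_{n-1}\mapsto0$, and since $(e_1,m_1,\ldots,m_{n-1})$ is a basis, $B$ is a single nilpotent Jordan block in this basis, so $B^n=0$ and $\psi$ maps $V$ into $\mc N$.

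It then remains to check that $\psi$ is a two-sided inverse on suitable open sets and to invoke irreducibility of $\mc N$. From the chain description $\varphi(\psi(M))=(Be_1|\cdots|B^{n-1}e_1)=(m_1|\cdots|m_{n-1})=M$, so $\varphi\circ\psi=\id_V$. Conversely let $W=\{A\in\mc N:\det(e_1|\varphi(A))\ne0\}$, the locus where $e_1$ is cyclic for $A$; it is open and nonempty, since the regular nilpotent $N$ with $Ne_i=e_{i+1}$ $(i<n)$ and $Ne_n=0$ satisfies $(e_1|\varphi(N))=I$. For $A\in W$, nilpotency gives $A(e_1|\varphi(A))=(\varphi(A)|A^ne_1)=(\varphi(A)|0)$, whence $A=(\varphi(A)|0)(e_1|\varphi(A))^{-1}=\psi(\varphi(A))$, so $\psi\circ\varphi=\id_W$. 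Thus $\varphi|_W\colon W\to V$ and $\psi|_V\colon V\to W$ are mutually inverse isomorphisms of nonempty open sets, and since $\mc N$ is irreducible these are dense, so $\varphi$ is birational. The main point to get right is the nilpotency of the reconstructed matrix $\psi(M)$ together with the bookkeeping $A^ne_1=0$ that makes the appended zero column match; everything else is a formal consequence of the cyclic-chain description.
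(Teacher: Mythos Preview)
Your argument is correct and essentially identical to the paper's: your inverse $\psi(M)=(M|0)(e_1|M)^{-1}$ is exactly the paper's $\psi(B)=[e_1|B]N[e_1|B]^{-1}$ (since $[e_1|B]N=(B|0)$), your open sets $V$ and $W$ coincide with the paper's $V_{\ov d}$ and $U_d$ via $\det(e_1|M)=\pm\ov d(M)$, and the weight computation in (ii) matches the paper's verbatim. The only cosmetic additions are your explicit check that $W\ne\emptyset$ and the appeal to irreducibility of $\mc N$, both of which the paper leaves implicit.
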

\begin{proof}
(i).\ In the proof below we will temporarily use the letter $B$ to denote an $n\times(n-1)$ matrix rather than the group of invertible upper triangular matrices.
Let $\ov d$ be the minor $\Big(\bmat{2\vspace{-.1cm}\\\vdots\\n}|\bmat{1\vspace{-.1cm}\\\vdots\\n-1}\Big)$ on $\Mat_{n,n-1}$.
Then $d|_{\mc N}=\pm\ov d\circ\varphi$. We show that $\varphi$ is an isomorphism between the special open subsets
$$U_d=\{A\in\mc N\,|\,d(A)\ne0\}\text{\ and\ }V_{\ov d}=\{B\in\Mat_{n,n-1}\,|\,\ov d(B)\ne0\}\,.$$
Clearly, $U_d=\varphi^{-1}(V_{\ov d})$.
If $B\in V_{\ov d}$, then $[e_1|B]$ is invertible. Now let $N$ be the $n\times n$ matrix which
is $1$ on the first lower co-diagonal and zero elsewhere, and define $\psi:V_{\ov d}\to\mc N$
by $\psi(B)=[e_1|B]N[e_1|B]^{-1}$. Then $\varphi(\psi(B))=B$, since $[e_1|B]$ fixes $e_1$. In particular $\psi(B)\in U_d$.
If $A\in U_d$, then $\big(e_1,A(e_1),\ldots,A^{n-1}(e_1)\big)$ is a linear basis of $k^n$ and $[e_1|\varphi(A)]$ sends $e_i$ to $A^{i-1}e_1$.
So $\psi(\varphi(A))=A$.\\
(ii).\ The first formula is obvious. Now let $f$ be as stated, let $A\in\mc N$ and let $S\in H$. Then
$f(\varphi(S^{-1}AS))=f(s_{11}S^{-1}\varphi(A))=s_{11}^{-|\chi|}\chi(S)f(\varphi(A))$, since $f$ is homogeneous
of degree $-|\chi|$.
\end{proof}

\begin{thmgl}\label{thm.basis}
Let $\lambda$ be a partition of $r$ with length $\le n-1$.
\begin{enumerate}[{\rm(i)}]
\item The $u_T$, $T$ semi-standard of shape $\lambda$ with entries in $\{1,\ldots,n-1\}$, form a basis of the $k[\g]^G$-module $k[\g]^U_{[(r),\lambda]}$.
The same holds for the $\tilde u_T$.
\item The $v_T$, $T$ semi-standard of shape $\lambda$ with entries in $\{1,\ldots,n-1\}$, form a basis of the $k[\g]^G$-module $k[\g]^U_{[\lambda,(r)]}$.
The same holds for the $\tilde v_T$.
\end{enumerate}
\end{thmgl}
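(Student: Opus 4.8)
The plan is to prove part~(i) for the family $\tilde u_T$ by transporting the standard bideterminant basis through $\varphi$, to deduce the statement for $u_T$ from the corollary to Theorem~\ref{thm.matid}, and to obtain part~(ii) from part~(i) by a duality. For the duality, let $J$ be the antidiagonal permutation matrix and set $\theta(A)=JA'J$; this is a linear involution of $\g$ with $\theta(SAS^{-1})=\tau(S)\theta(A)\tau(S)^{-1}$, where $\tau(S)=J(S')^{-1}J$ is an automorphism of $\GL_n$ that preserves $U$ and $H$ and acts on weights by $\chi\mapsto-\chi^{\rm rev}$. Since $\tau$ fixes $k[\g]^G$ and $-[(r),\lambda]^{\rm rev}=[\lambda,(r)]$, the comorphism $\theta^*$ carries $k[\g]^U_{[(r),\lambda]}$ isomorphically onto $k[\g]^U_{[\lambda,(r)]}$ as a $k[\g]^G$-module; a direct computation with \eqref{eq.main1} shows $\theta^*u_T=\pm v_T$ and $\theta^*\tilde u_T=\pm\tilde v_T$, so (ii) follows once (i) is known.

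Write $\chi=[(r),\lambda]$, $R=k[\g]^G=k[s_1,\dots,s_n]$ and $M=k[\g]^U_\chi$; by the preliminaries $M$ is a graded free $R$-module of rank $N:=\dim L_{\mb C}(\chi)_{\bf 0}=\dim k[\mc N]^U_\chi$. Let $\mf m=(s_1,\dots,s_n)$, so $\mc N=V(\mf m)$ and the restriction map $M\to k[\mc N]$ factors through the $N$-dimensional graded space $M/\mf m M$. I claim it suffices to show that the $\tilde u_T|_{\mc N}$, for $T$ semistandard of shape $\lambda$ with entries in $\{1,\dots,n-1\}$, are linearly independent and $N$ in number. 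Indeed they then form a basis of $k[\mc N]^U_\chi$, their images in $M/\mf m M$ are independent and hence a $k$-basis, and graded Nakayama shows that the $\tilde u_T$ generate $M$; being $N$ elements of a free module of rank $N$ they are a basis. By the corollary to Theorem~\ref{thm.matid} one has $u_T|_{\mc N}=\pm\,\tilde u_T|_{\mc N}$, so the identical argument yields the basis statement for the $u_T$.

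The bridge to bideterminants is $\varphi$. As the $i$-th column of $\varphi(A)$ is $A^i(e_1)$, for a one-column $T$ of length $l$ the matrix $\big(A^{t_1}(e_1)|\cdots|A^{t_l}(e_1)\big)_{l\ge}$ is exactly the submatrix of $\varphi(A)$ on rows $n-l+1,\dots,n$ and columns $t_1,\dots,t_l$; multiplying over columns gives $\tilde u_T|_{\mc N}=(T_\lambda\,|\,T)\circ\varphi$, where $T_\lambda$ is the anti-canonical tableau. For $T$ semistandard the bideterminants $(T_\lambda\,|\,T)$ belong to the standard monomial basis of $k[\Mat_{n,n-1}]$ (see \cite{DeCEP}), hence are linearly independent; as $\varphi$ is birational (Lemma~\ref{lem.varphi}(i)) it is dominant and $\varphi^*$ is injective, so the $\tilde u_T|_{\mc N}$ are linearly independent. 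The weight is correct: $(T_\lambda\,|\,T)$ has left $H$-weight $-\lambda^{\rm rev}$, and by Lemma~\ref{lem.varphi}(ii) pulling back shifts this by $r\ve_1$, yielding $[(r),\lambda]$.

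It remains to show there are exactly $N$ such tableaux, equivalently that the $\tilde u_T|_{\mc N}$ span $k[\mc N]^U_\chi$; this is the main obstacle. I would derive it from the sharper statement of Lemma~\ref{lem.varphi}(i) that $\varphi$ restricts to an isomorphism $U_d\xrightarrow{\sim}V_{\ov d}$. Pulling back identifies $k[\mc N][d^{-1}]^U_\chi$ with the left-$U$-invariants of $k[\Mat_{n,n-1}][\ov d^{-1}]$ of the matching left weight; on the matrix side $\ov d$ is a left $U$-semi-invariant and the left $U$-invariants of $k[\Mat_{n,n-1}]$ are spanned by semistandard bideterminants, so after inverting $\ov d$ every such invariant is an $R[d^{-1}]$-combination of the $(T_\lambda\,|\,T)$. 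Transporting back shows the $\tilde u_T|_{\mc N}$ span $k[\mc N]^U_\chi$ after inverting $d$; the delicate point is to remove the negative powers of $d$ by a homogeneity argument, controlling highest weight vectors supported on $\mc N\sm U_d$ (the subject of Section~\ref{s.d}). Together with the independence of the previous paragraph this gives the basis of $k[\mc N]^U_\chi$, hence the count $N$, and by the second paragraph the theorem. One may instead bypass spanning by checking the combinatorial identity that the number of these tableaux equals $\dim L_{\mb C}([(r),\lambda])_{\bf 0}$ (both sides equal the dimension of the irreducible $\GL_{n-1}$-module of highest weight $\lambda$) and invoking only linear independence; I nonetheless expect the removal of negative powers of $d$ to be the genuinely delicate step.
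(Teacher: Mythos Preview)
Your overall architecture---reduce via graded Nakayama to showing the $\tilde u_T|_{\mc N}$ form a basis of $k[\mc N]^U_{[(r),\lambda]}$, identify $\tilde u_T|_{\mc N}=(T_\lambda\,|\,T)\circ\varphi$, get linear independence from the standard basis theorem in \cite{DeCEP} together with injectivity of $\varphi^*$, and deduce (ii) from (i) via the involution $A\mapsto JA'J$---is exactly the paper's proof.

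The one place you diverge is the spanning step, and there your emphasis is misplaced. Your primary route through the localisation $k[\mc N][d^{-1}]$ and ``removal of negative powers of $d$'' is not what the paper does, and as written it risks circularity: the results you point to in Section~\ref{s.d} (in particular Proposition~\ref{prop.d}(iii),(iv)) are \emph{proved using} Theorem~\ref{thm.basis}. The paper instead takes precisely the route you relegate to an afterthought: a direct dimension count showing $\dim k[\mc N]^U_{[(r),\lambda]}=\dim L_{\mb C}([(r),\lambda])_{\bf 0}$ equals the number of semistandard tableaux of shape $\lambda$ with entries in $\{1,\dots,n-1\}$. This is done either via Stembridge's rational tableaux (the extra semistandardness condition forces both tableaux to avoid the entry $1$, and then lowering entries by $1$ gives the bijection) or via the Kostka identity $K_{\chi+\lambda_1{\bf 1},\lambda_1{\bf 1}}=\dim L_{\mb C}^{\GL_{n-1}}(\lambda)$ from \cite[Ex.~I.6.2(c)]{Mac}. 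There is no delicate step; once you have independence plus the correct count, you are done.
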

\begin{proof}
(i).\ To prove the first assertion it suffices, by \cite[Lem.~2, Prop.~1]{T} (essentially the graded Nakayama Lemma), to show that the restrictions of
the $u_T$ to $\mc N$, $T$ as stated, form a basis of $k[\mc N]^U_{[(r),\lambda]}$ as a vector space over $k$. Similarly for the second assertion and
the $\tilde u_T$. By the corollary to Theorem~\ref{thm.matid} $\tilde u_T=\pm u_T$ on $\mc N$, so it suffices to prove the assertion about the restrictions
of the $\tilde u_T$ to $\mc N$.

Let $\varphi$ be the morphism from Lemma~\ref{lem.varphi}. Then $\tilde u_T|_{\mc N}=(T_\lambda\,|\,T)\circ\varphi$. By \cite[Thm.~3.3]{DeCEP} the
bideterminants $(T_\lambda\,|\,T)$, $T$ semi-standard of shape $\lambda$ with entries in $\{1,\ldots,n-1\}$ form a basis of $k[\Mat_{n,n-1}]^U_{-\lambda^{\rm rev}}$.
By Lemma~\ref{lem.varphi}(i) the comorphism of $\varphi$ is injective, so the $\tilde u_T|_{\mc N}$, $T$ as above, are linearly independent.
Furthermore, $\tilde u_T\in k[\mc N]^U_{[(r),\lambda]}$ by Lemma~\ref{lem.varphi}(ii).

We can now finish with a dimension argument. By \cite[Thm.~11]{Kos} (see \cite[Prop.~1]{T} for references for the case of prime characteristic)
$k[\mc N]^U_{\chi}$ has dimension equal to $\dim L_{\mathbb C}(\chi)_{\bf 0}$, where $\chi=[(r),\lambda]$. By \cite[Prop.~2.4(b)]{Stem} this is equal to
the number of semi-standard rational tableaux of shape $((r),\lambda)$ and weight $\bf 0$. The extra condition for semi-standardness of a rational tableau
means in this case that the two tableaux should not contain $1$, since the two tableaux must have the same weight.
So these rational tableaux are clearly in bijection with the semi-standard tableaux of shape $\lambda$ with entries
in $\{1,\ldots,n-1\}$ (of arbitrary weight): simply lower all entries of the second tableau by $1$
and omit the first tableau (of shape $(r)$). Alternatively, this can be deduced from \cite[Ex.~I.6.2(c)]{Mac} as follows. By tensoring with $\det^{\lambda_1}$
we get that $\dim L_{\mathbb C}(\chi)_{\bf 0}$ is equal to the Kostka number $K_{\chi+\lambda_1{\bf 1},\lambda_1{\bf 1}}$.
By \cite[Ex.~1.6.2(c)]{Mac} we have that  this is equal to
$\dim L^{\GL_{n-1}}_{\mathbb C}(\lambda_1{\bf 1}_{n-1}-\lambda^{{\rm rev}_{n-1}})=\dim L^{\GL_{n-1}}_{\mathbb C}(-\lambda^{{\rm rev}_{n-1}})$,
where $\lambda^{{\rm rev}_{n-1}}$ is the reverse of $\lambda$ as an $(n-1)$-tuple. Since dual modules have the same dimension, this is equal
to $\dim L^{\GL_{n-1}}_{\mathbb C}(\lambda)$, which is the number of semi-standard tableaux of shape $\lambda$ with entries in $\{1,\ldots,n-1\}$.\\
(ii).\ Let $\Phi$ be the involution of the algebra $k[\g]$ corresponding to the involution $A\mapsto PA'P$ of the vector space $\g=\gl_n$, where
$A'$ is the transpose of $A$ and $P$ is the permutation matrix which is one on the anti-diagonal and zero elsewhere.
Then $\Phi(k[\g]^U_{\chi})=k[\g]^U_{-\chi^{\rm rev}}$, $\Phi(u_T)=\pm v_T$ and $\Phi(\tilde u_T)=\pm\tilde v_T$. So (ii) follows from (i).
\end{proof}

\begin{remsgl}\label{rems.invs}
1.\ The semi-invariants $\tilde u_T$ and $\tilde v_T$ also occur in \cite{Dom} and, in much bigger generality, in \cite{BR}.\\
2.\ Assume $k=\mb C$. Let $\lambda$ be a partition of $r$ with $l(\lambda)\le n-1$. By \cite[Prop.~4.2(ii)]{B1} the graded multiplicity of $L([(r),\lambda])$
in $k[\mc N]$ is given by $s_{\lambda}(q,\ldots,q^{n-1})$, where $s_{\lambda}$ is the Schur function associated to $\lambda$.
In accordance with Theorem~\ref{thm.basis}(i), this is equal to $\sum_Tq^{|T|}$, where the sum is over the semi-standard tableaux of
shape $\lambda$ with entries in $\{1,\ldots,n-1\}$. Note that there is a unique semi-standard tableau of shape $\lambda$ with $|T|$ minimal,
it has all entries in the $i$-th  row equal to $i$. So the first degree where $L([(r),\lambda])$ occurs in
$k[\g]$ or $k[\mc N]$ is $\sum_{i=1}^{l(\lambda)}i\lambda_i$, and in this degree it occurs with multiplicity $1$.
The same is true for $L([\lambda,(r)])$. The highest degree where $L([(r),\lambda])$ occurs in $k[\mc N]$ was determined by Kostant for any
reductive group. See \cite[Thm.~17]{Kos}. In our case this is $\sum_{i=1}^{l(\lambda)}(n-i)\lambda_i$.\\
3.\ Let $\chi=[\lambda,\mu]$ be dominant and in the root lattice, then $\lambda$ and $\mu$ are partitions of the same number, $t$ say.
In \cite[\S2]{T} a natural highest weight vector $E_\chi\in\g^{\ot t}$ was defined. Let $\psi_t$ and $\vartheta$ be as defined there.
In \cite[\S4]{T} it was asked whether the elements
\begin{equation}\label{eq.spset}
\vartheta\big(\psi_t(E_\lambda)\cdot s_{i_1}\ot\cdots\ot s_{i_t}\big)\,,
\end{equation}
$2\le i_1,\ldots,i_t\le n$,
generate the $k[\g]^G$-module $k[\g]^U_{\chi}$. In view of Theorem~\ref{thm.basis} the answer to this question is affirmative if $\chi_{{}_2}\le0$
or $\chi_{{}_{n-1}}\ge0$. One only has to observe that if $\chi_{{}_2}\le0$ the column stabiliser $C_\lambda$ is trivial, so each of the above
semi-invariants is a sum over $C_\mu$ which factorises as a product. Each of the factors in this product is equal to a $u_T$, $T$ a one-column
tableau. This also makes clear that Theorem~\ref{thm.basis} generalises both Thm.~1 and Thm.~2 in \cite{T}. So the statement at the end of
\cite{T} that the invariants from \cite[Thm.~2]{T} are not formed in accordance with the question in \S4 there is incorrect.\\
4.\ Put $\chi_t=[1^t,1^t]$. By \cite[Prop.~4.2(i)]{B1} the graded multiplicity of $L_{\mb C}(\chi_t)$ in $k[\mc N]$ is
$\frac{q^t(1-q^{n-2t+1})}{1-q^{n-t+1}}{n\brack t}_q$ which tends to $\frac{q^t}{(1-q)\cdots(1-q^t)}$ as $n\to\infty$.
%{n\brack t}_q=\prod_{i=0}^{t-1}(1-q^{n-i})/\prod_{i=0}^{t-1}(1-q^{t-i})
So the stable multiplicity (see \cite{B4} for more about this) of $L_{\mb C}(\chi_t)$ in the degree $d$-piece of $k[\mc N]$
is the number of partitions of $d$ of length $t$. For the weight $\chi_t$ changing the order of the arguments in \eqref{eq.spset} gives the same element
up to a sign. So \eqref{eq.spset} gives us a candidate spanning set labelled by partitions of length $t$ (lower all indices by $1$) and
by the above the elements of this set labelled by partitions of degree $d$ and length $t$ would have to be independent for big $n$.\\
5.\ Computer calculations show that the answer to the question from \cite[\S4]{T} (Remark~3 above) is in general no. For example, for $n=4$ and $\chi=(2,1,-1,-2)$ the
elements from \eqref{eq.spset} give only a $1$-dimensional space of invariants in degree $6$ and not the required $2$ dimensions.
However, it turns out that in all cases were I found that the elements from \eqref{eq.spset} don't span, replacing $E_\lambda$ by a suitable
$\Sym_t\times\Sym_t$-conjugate does give a spanning set.
\end{remsgl}

\section{Nilpotent orbit closures}\label{s.semiinvs2}
In this section we will extend results from Section~\ref{s.semiinvs} to nilpotent orbit closures. Recall that nilpotent orbits in $\gl_n$ are parameterised by partitions of $n$, see \cite{Jan2}. We denote the nilpotent orbit corresponding to the partition $\mu$ by $\mc O_\mu$. In the Jordan normal form of an element of $\mc O_\mu$ the blocks have sizes $\mu_1,\ldots,\mu_m$, where $m$ is the length of $\mu$. It turns out that for the weights that we consider only the biggest block size $\mu_1$ is relevant.

\begin{thmgl}\label{thm.basis2}
Let $C=\ov{\mc O}_\mu$ be a nilpotent orbit closure and let $\lambda$ be a partition of $r$ with length $l(\lambda)\le n-1$. Then the vector space $k[C]^U_{[(r),\lambda]}$ is nonzero if and only if $l(\lambda)\le\mu_1-1$. The same holds for the vector space $k[C]^U_{[\lambda,(r)]}$. Furthermore the following hold.
\begin{enumerate}[{\rm(i)}]
\item The $\tilde u_T|_C$, $T$ semi-standard of shape $\lambda$ with entries in $\{1,\ldots,\mu_1-1\}$, form a basis of $k[C]^U_{[(r),\lambda]}$.
\item The $\tilde v_T|_C$, $T$ semi-standard of shape $\lambda$ with entries in $\{1,\ldots,\mu_1-1\}$, form a basis of $k[C]^U_{[\lambda,(r)]}$.
\end{enumerate}
\end{thmgl}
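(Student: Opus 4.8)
The plan is to mirror the strategy used for Theorem~\ref{thm.basis}, transporting everything across the birational morphism $\varphi$ from Lemma~\ref{lem.varphi}, but now restricted to the orbit closure $C=\ov{\mc O}_\mu$. The key geometric fact I would first try to establish is a description of the image $\varphi(C)\subseteq\Mat_{n,n-1}$. Since $\varphi(A)=\big(A(e_1)|\cdots|A^{n-1}(e_1)\big)$ and an element of $\mc O_\mu$ has largest Jordan block of size $\mu_1$, the matrix powers $A^{\mu_1}(e_1),A^{\mu_1+1}(e_1),\dots$ should be expressible in terms of lower powers (because the minimal polynomial of a nilpotent $A\in C$ is $t^{\mu_1}$, so $A^{\mu_1}=0$ and hence $A^j(e_1)=0$ for $j\ge\mu_1$). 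This means the last $n-1-(\mu_1-1)=n-\mu_1$ columns of $\varphi(A)$ vanish, so $\varphi(C)$ lands in the subvariety of $\Mat_{n,n-1}$ whose columns indexed $\mu_1,\dots,n-1$ are zero. I expect this to be the crucial reduction: it collapses the relevant index set for tableau entries from $\{1,\dots,n-1\}$ down to $\{1,\dots,\mu_1-1\}$.

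\medskip

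Concretely, I would proceed as follows. First, on $C$ one has $\tilde u_T|_C=(T_\lambda\,|\,T)\circ\varphi$ exactly as in the proof of Theorem~\ref{thm.basis}, and $\tilde v_T$ is obtained from $\tilde u_T$ via the involution $\Phi$ of that proof, so it suffices to treat the $\tilde u_T$ and deduce (ii) from (i). Next, I would observe that since $A^j(e_1)=0$ for $j\ge\mu_1$ when $A\in\mc O_\mu$ (and by continuity on the whole closure $C$), the function $\tilde u_T$ vanishes identically on $C$ whenever $T$ has any entry $\ge\mu_1$; in particular the determinant defining $\tilde u_{T^i}$ has a zero column. This immediately forces $k[C]^U_{[(r),\lambda]}=0$ unless $\lambda$ admits a semi-standard filling with entries in $\{1,\dots,\mu_1-1\}$, i.e.\ unless $l(\lambda)\le\mu_1-1$, giving the nonvanishing criterion.

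\medskip

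For the linear independence and spanning in the nonzero case, I would restrict $\varphi$ to an isomorphism between $U_d\cap C$ and the corresponding special open subset of $\varphi(C)$, so that the comorphism on $C$ remains injective on the relevant semi-invariants. The $\tilde u_T|_C$ with $T$ semi-standard of shape $\lambda$ and entries in $\{1,\dots,\mu_1-1\}$ are then pullbacks of bideterminants $(T_\lambda\,|\,T)$ restricted to $\varphi(C)$, and I would invoke a De Concini--Eisenbud--Procesi type straightening result (as in \cite[Thm.~3.3]{DeCEP}) on the subvariety of $\Mat_{n,n-1}$ with vanishing last columns, which is itself a space of $n\times(\mu_1-1)$ matrices padded by zeros, to get that these bideterminants are linearly independent and span $k[\varphi(C)]^U_{-\lambda^{\rm rev}}$. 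Finally I would close with a dimension count: $\dim k[C]^U_{[(r),\lambda]}$ should equal the number of semi-standard tableaux of shape $\lambda$ with entries in $\{1,\dots,\mu_1-1\}$, matching $\dim L^{\GL_{\mu_1-1}}_{\mb C}(\lambda)$ by the same rational-tableau/Kostka-number argument used in Theorem~\ref{thm.basis}(i), with $n-1$ replaced by $\mu_1-1$.

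\medskip

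\textbf{The main obstacle} I anticipate is justifying the dimension formula $\dim k[C]^U_{[(r),\lambda]}=\dim L^{\GL_{\mu_1-1}}_{\mb C}(\lambda)$ intrinsically: Kostant's theorem gives the multiplicities in $k[\mc N]$, but for a proper orbit closure $C$ one does not have the clean harmonic-space description, so I would instead prove the two matching inequalities directly — linear independence of the $\tilde u_T|_C$ gives the lower bound, and I would need the reverse bound, presumably by showing that $\varphi$ restricts to a birational (or at least dominant with injective comorphism) map onto the zero-padded matrix variety and that the latter's ring of $U$-semi-invariants is exhausted by the listed bideterminants. Making the birationality of $\varphi|_C$ and the precise identification of $\varphi(C)$ rigorous — in particular checking that $\varphi(C)$ is exactly the reduced subvariety cut out by the vanishing of columns $\mu_1,\dots,n-1$ and not something smaller — is where the real work lies.
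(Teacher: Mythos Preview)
Your vanishing observation and the linear-independence-via-pullback idea are both correct and match the paper, but the spanning argument has a real gap that your proposed fixes cannot close. First a concrete problem: for any proper orbit closure $C\subsetneq\mc N$ one has $\mu_1<n$, so $A^{n-1}=0$ on $C$ and hence $d|_C\equiv 0$; thus $U_d\cap C=\emptyset$ and you cannot restrict $\varphi$ to an isomorphism there. More fundamentally, the map $\varphi_m:C\to\Mat_{n,\mu_1-1}$ (equivalently your $\varphi|_C$ into the zero-padded matrices) is almost never birational: e.g.\ for $\mu=(2,1^{n-2})$ one has $\dim C=2n-2$ while $\dim\Mat_{n,1}=n$. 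So the comorphism is injective (giving linear independence) but not surjective, and you cannot read off $\dim k[C]^U_{[(r),\lambda]}$ from the matrix side. Your Kostka-type dimension count has no independent source here either; Kostant's theorem is specific to $\mc N$.

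The paper supplies the missing upper bound differently: it invokes Donkin's result \cite[Thm.~2.1(c), Lem.~1.3(a)]{Don} that $(\mc N,C)$ is a \emph{good pair} of $G$-varieties, so by standard good-filtration properties the restriction $k[\mc N]^U_{[(r),\lambda]}\to k[C]^U_{[(r),\lambda]}$ is surjective. Theorem~\ref{thm.basis} then hands you a spanning set on $\mc N$, and your vanishing observation kills exactly those $\tilde u_T$ with an entry $\ge\mu_1$, leaving the claimed family. Linear independence is established essentially as you propose, except that the paper shows directly that $\varphi_m:C\to\Mat_{n,\mu_1-1}$ is dominant by exhibiting, for generic $B\in\Mat_{n,\mu_1-1}$, an explicit $A\in\mc O_{\mu_11^{n-\mu_1}}\subseteq C$ with $\varphi_m(A)=B$. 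Once you have the good-pair surjectivity, no separate dimension formula for $k[C]$ is needed.
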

\begin{proof}
We only prove (i), the proof of (ii) is completely analogous. Put $m=\mu_1-1$. By \cite[Thm.~2.1(c), Lem.~1.3(a)]{Don}, $(\mc N,C)$ is a good pair of $G$-varieties. So, by standard properties of good filtrations, the restriction of functions $k[\mc N]\to k[C]$ induces a surjection $k[\mc N]^U_{[(r),\lambda]}\to k[C]^U_{[(r),\lambda]}$. Furthermore, it is clear from the definition of $\tilde u_T$ that $\tilde u_T$ is zero on $C$ unless $T$ has all its entries in $\{1,\ldots,m\}$. In particular,
we must have $l(\lambda)\le m$.

So, by Theorem~\ref{thm.basis}, it suffices to show that the $\tilde u_T$ where $T$ is semi-standard of shape $\lambda$ with entries in $\{1,\ldots,m\}$ are linearly independent on $C$. Let $\varphi_m:C\to\Mat_{n,m}$ be the morphism given by $\varphi_m(A)=(A(e_1)|\cdots|A^m(e_1))$. Then the analogue of Lemma~\ref{lem.varphi}(ii) for $\varphi_m$ holds and $\tilde u_T|_C=(T_\lambda\,|\,T)\circ\varphi_m$.
So, because of the arguments in the proof of Theorem~\ref{thm.basis}, it suffices to show that the comorphism of $\varphi_m$ is injective, i.e. that $\varphi_m$ is dominant. Let $V$ be the nonempty open subset of $\Mat_{n,m}$ consisting of the matrices which after removing the first row still have maximal rank $m$. Let $B=(v_1|\cdots|v_m)\in V$. Then $(e_1,v_1,\ldots,v_m)$ is independent, so we can extend it to a basis of $k^n$. Now define $A\in\g$ by $A(v_i)=v_{i+1}$ for all $i\in\{0,\cdots,m\}$, where we put $v_0=e_1$ and $v_{m+1}=0$, and $A$ is zero on the other basis vectors. Then $A\in\mc O_{\mu_11^{n-\mu_1}}\subseteq C$ and $\varphi_m(A)=B$. So indeed $\varphi_m$ is dominant.
\end{proof}

\begin{remsgl}
1.\ Assume $k=\mb C$. Let $\mu$, $\lambda$ and $C$ be as in Theorem~\ref{thm.basis2} and assume $l(\lambda)\le\mu_1-1$. Then the graded multiplicity of $L([(r),\lambda])$
in $k[C]$ is equal to $\sum_Tq^{|T|}=s_\lambda(q,\ldots,q^{\mu_1-1})$, where the sum is over the semi-standard tableaux of shape $\lambda$ with entries in $\{1,\ldots,\mu_1-1\}$.
As in the case of $k[\mc N]$ (Remark~\ref{rems.invs}.2), the first degree where $L([(r),\lambda])$ occurs in $k[C]$ is $\sum_{i=1}^{l(\lambda)}i\lambda_i$, and in this degree it occurs with multiplicity $1$. There is also a unique tableau with $|T|$ maximal. It is obtained by filling the $i$-th row with $\mu_1-i$'s and then reversing all the columns. So the highest degree where $L([(r),\lambda])$ occurs in $k[C]$ is $\sum_{i=1}^{l(\lambda)}(\mu_1-i)\lambda_i$, and in this degree it occurs with multiplicity $1$. The same is true for $L([\lambda,(r)])$.\\
2.\ Let $C\subseteq\mc N$ be a nilpotent orbit closure. By \cite[Cor.~2.1(e)]{Don} the graded formal character of $k[C]$ is independent of the field $k$.
It is given by the ``Hesselink-Peterson-type" formula \cite[8.17]{Jan2}.
\end{remsgl}

\section{Further properties of the element $d$}\label{s.d}

We will now denote by $d$ the restriction to $\mc N$ of the $B$-semi-invariant $d$ from Section~\ref{s.semiinvs}.
Recall that $d$ has weight $[(n-1),1^{n-1}]$.
We will prove in this section some further properties of the element $d$ and the localisation $k[\mc N]^U[d^{-1}]$.
Note that $B$ acts rationally on $k[\mc N][d^{-1}]$. Recall that $\mc N$ is a normal variety, see e.g. \cite[8.5]{Jan2}.

\begin{propgl}\label{prop.d}
Denote the weight $[(n-1),1^{n-1}]$ of $d$ by $\eta$.
\begin{enumerate}[{\rm(i)}]
\item ${\rm CL}(\mc N)=\mathbb Z/n\mathbb Z$.
\item The variety $D\stackrel{\rm def}{=}\{A\in\mc N\,|\,d(A)=0\}$ is irreducible and
${\rm CL}(\mc N)$ is generated by the class of $D$. The principal divisor $(d)$ is equal to $nD$.
\item Let $\chi$ be a dominant weight in the root lattice. Then the embedding $k[\mc N]^U_\chi\Nts\to k[\mc N]^U_{\chi+\eta}$
given by multiplication by $d$ is an isomorphism if $\chi_{{}_2}\le 0$.
\item We have $k[\mc N][d^{-1}]^U_\chi\ne0$ if and only if there exists an $r\ge0$ such that $\chi+r\eta$ is dominant and in the root lattice
and has second component $\le0$. Now let $r$ be like that and let $\lambda$ be the partition given by
$[(s),\lambda]=\chi+r\eta$, where $s=|\lambda|$. Then $\dim k[\mc N][d^{-1}]^U_\chi$ is equal to the number of semi-standard tableaux
of shape $\lambda$ with entries in $\{1,\ldots,n-1\}$.
\end{enumerate}
\end{propgl}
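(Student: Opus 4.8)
The plan is to prove the four parts of Proposition~\ref{prop.d} in order, using the divisor $D$ and the element $d$ as the main tools, and leaning heavily on the normality of $\mc N$ and on Theorem~\ref{thm.basis}/Theorem~\ref{thm.basis2}. For part~(i), I would compute the divisor class group $\mathrm{CL}(\mc N)$. Since $\mc N$ is normal with $\mc N\setminus\{\text{singular locus}\}$ having complement of codimension $\ge2$ (the regular nilpotent orbit is dense and its complement has codimension $2$), $\mathrm{CL}(\mc N)$ equals $\mathrm{CL}$ of the smooth locus, which is the regular nilpotent orbit $\mc O_{\mathrm{reg}}\cong G/Z_G(e)$. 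The centraliser of a regular nilpotent element is connected of the form $Z\times(\text{unipotent})$, and one computes the character group of the relevant reductive part; the cyclic group $\mathbb Z/n\mathbb Z$ should emerge from the determinant/$\mu_n$ obstruction. I expect this to follow from a known computation of $\mathrm{Pic}$ or $\mathrm{CL}$ of nilpotent orbits, or alternatively from exhibiting $D$ directly as in part~(ii).

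For part~(ii), I would first show $D=\{d=0\}$ is irreducible. Using the isomorphism $\varphi$ of Lemma~\ref{lem.varphi} between $U_d\subseteq\mc N$ and $V_{\ov d}\subseteq\Mat_{n,n-1}$, and that $d|_{\mc N}=\pm\ov d\circ\varphi$, the vanishing locus of $d$ is the pullback (closure) of the vanishing locus of the minor $\ov d$; since $\varphi$ is birational, $D$ is irreducible exactly when the corresponding divisor in $\Mat_{n,n-1}$ is, which it is (the vanishing of a single maximal minor defines an irreducible hypersurface, being a determinantal variety of the expected codimension). Then I would identify the class of $D$ as a generator and compute $(d)=nD$. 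The coefficient $n$ I expect to come from the weight $\eta=[(n-1),1^{n-1}]$: since $d$ is a $B$-semi-invariant of this weight, its divisor is $B$-stable, hence supported on the $B$-stable prime divisor $D$, and the multiplicity $n$ should be read off from the order of vanishing of $d$ along the generic point of $D$ via the local structure given by $\varphi$. Matching this with $\mathrm{CL}(\mc N)=\mathbb Z/n\mathbb Z$ then forces $[D]$ to be a generator.

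For part~(iii), I would argue that multiplication by $d$ gives an injection $k[\mc N]^U_\chi\to k[\mc N]^U_{\chi+\eta}$ (injectivity is automatic since $\mc N$ is a domain and $d\ne0$), so it suffices to compare dimensions when $\chi_{{}_2}\le0$. By the dimension count in Theorem~\ref{thm.basis} (via $\dim L_{\mb C}(\chi)_{\bf 0}$ and the tableau count), both sides have the same dimension precisely when the relevant sets of semi-standard tableaux are in bijection; multiplication by $d$ corresponds on the tableau side to prepending the column $(1,2,\dots,n-1)^{\mathsf t}$, i.e. to the anti-canonical-type shift, and the condition $\chi_{{}_2}\le0$ ensures this prepending is a bijection onto tableaux of the shifted shape. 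I would make this bijection explicit and verify it respects semi-standardness.

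For part~(iv), I would combine (iii) with the grading: $k[\mc N][d^{-1}]^U_\chi=\varinjlim_r k[\mc N]^U_{\chi+r\eta}$ (the direct limit along multiplication by $d$). This is nonzero iff some $\chi+r\eta$ is dominant and in the root lattice with a nonzero space, and by Theorem~\ref{thm.basis} the first such constraint is dominance; the extra requirement that the second component be $\le0$ comes from (iii), where the stabilisation of the multiplication-by-$d$ maps occurs. Once $r$ is chosen so that $\chi+r\eta=[(s),\lambda]$ with $\chi_{{}_2}+r\eta_{{}_2}\le0$, part~(iii) shows the transition maps are isomorphisms from that point on, so $\dim k[\mc N][d^{-1}]^U_\chi=\dim k[\mc N]^U_{[(s),\lambda]}$, which by Theorem~\ref{thm.basis}(i) is the number of semi-standard tableaux of shape $\lambda$ with entries in $\{1,\dots,n-1\}$. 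The main obstacle I anticipate is part~(ii), specifically pinning down the multiplicity $n$ in $(d)=nD$ and matching it cleanly with the class-group computation in~(i); the direct-limit bookkeeping in~(iv) is then formal once (iii) is in hand.
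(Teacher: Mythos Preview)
Your approaches to (i), (iii), and (iv) match the paper's. For (iii) the paper also compares dimensions via the tableau bijection given by deleting or prepending a full first column of length $n-1$; for (iv) it reduces to $k[\mc N]^U_{\chi+r\eta}$ via (iii) exactly as you describe and then invokes Theorem~\ref{thm.basis}(i).

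The gap is in (ii). Your argument for the irreducibility of $D$ rests on $\varphi$: you claim that ``$D$ is irreducible exactly when the corresponding divisor in $\Mat_{n,n-1}$ is''. But $\varphi$ is an isomorphism precisely on the complement $U_d=\mc N\sm D$, so it gives you no control over $D$ itself. The preimage of an irreducible hypersurface under a merely birational morphism need not be irreducible --- the locus where the map fails to be an isomorphism can have many components --- and for the same reason $\varphi$ provides no ``local structure'' near the generic point of $D$ from which to read off the multiplicity in $(d)=nD$. (What $\varphi$ \emph{does} give, and what the paper uses, is that $k[\mc N\sm D]\cong k[\Mat_{n,n-1}][\ov d^{-1}]$ is a UFD, so ${\rm CL}(\mc N)$ is generated by the components of $D$; but that still does not prove $D$ is irreducible.)

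The paper instead pulls everything back along the orbit map $\GL_n\to\mc O_{\rm reg}$, $S\mapsto S^{-1}NS$. A direct calculation gives $d(S^{-1}NS)=\pm s_{11}^n/\det(S)$, which shows simultaneously that $D\cap\mc O_{\rm reg}$ is the image of the irreducible set $\{s_{11}=0\}$ (hence irreducible) and that if $(d)=mD$ then $m\le n$; combined with $n\mid m$ from (i) this yields $m=n$. The step your plan misses entirely is showing that $D\cap\mc O_{\rm reg}$ is dense in $D$: a priori the subregular orbit closure could be a separate irreducible component of $D$. The paper rules this out by exhibiting an explicit one-parameter family $S_t^{-1}NS_t\in D\cap\mc O_{\rm reg}$ whose limit as $t\to0$ is a subregular nilpotent.
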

\begin{proof}
(i).\ This is no doubt well-known; we include a proof for lack of reference.
If $\mc O_{\rm reg}$ is the regular nilpotent orbit, then $\mc N\sm\mc O_{\rm reg}$ has codimension $\ge 2$, so
${\rm CL}(\mc N)={\rm CL}(\mc O_{\rm reg})={\rm Pic}(\mc O_{\rm reg})$. Now let $N$ be the $n\times n$ matrix which
is $1$ on the first lower co-diagonal and zero elsewhere. Then $N\in\mc O_{\rm reg}$ and $\mc O_{\rm reg}\cong G/K$, where
$K={\rm Stab}_G(N)$. Since $K$ is the centre $Z$ of $G=\GL_n$ times a group of lower unitriangular matrices, we
have that the character group of $K$ is the same as that of $Z$. The result now follows from \cite[Thm.~4]{Po}.

\noindent(ii).\ Let $\mc O_{\rm reg}$, $N$ and $K$ be as in (i). We first show that $D\cap\mc O_{\rm reg}$ is irreducible.
Let $\varphi$ be the morphism from Lemma~\ref{lem.varphi} and let $\ov d$ be the minor
$\Big(\bmat{2\vspace{-.1cm}\\\vdots\\n}|\bmat{1\vspace{-.1cm}\\\vdots\\n-1}\Big)$ on $\Mat_{n,n-1}$.
As we have seen, $d=\pm\ov d\circ\varphi$. Let $S=(s_{ij})_{ij}\in\GL_n$. Write $S^{-1}$ as a block matrix
$\bmat{S^{-1}_{11}&S^{-1}_{12}\\S^{-1}_{21}&S^{-1}_{22}}$ according to the partition
$\{\{1\},\{2,\ldots,n\}\}$ of the row and the column indices. Then $\pm d(S^{-1}NS)=\ov d(\varphi(S^{-1}NS))=\det(S_{22}^{-1}X)$,
where $X$ consists of the last $n-1$ rows of $(NSe_1,\ldots,N^{n-1}Se_1)$.
Now $\det(S_{22}^{-1})=s_{11}/\det(S)$ and $\det(X)=s_{11}^{n-1}$, so $d(S^{-1}NS)=0$ if and only if $s_{11}=0$.
So $D\cap\mc O_{\rm reg}$ is the image of an irreducible set under a morphism and therefore irreducible.
To prove that $D$ is irreducible it now suffices to show that $D\cap\mc O_{\rm reg}$ is dense in $D$ and for this it suffices to show
that the subregular nilpotent orbit is contained in the closure of $D\cap\mc O_{\rm reg}$. This is clear if $n=2$, since
$D\cap\mc O_{\rm reg}$ is a cone, so we assume now that $n\ge3$.

Let $N_{\rm sr}$ be the matrix which is zero at position $(n,n-1)$ and equal to the corresponding entry of $N$ elsewhere,
and put $$S_t=\begin{bmatrix}
0&\Nts\nts t^{n-1}\ \cdots&t^3&t^2&t\\
1&\cdots&0&0&0\\
\vdots&\ddots&\vdots&\vdots&\vdots\\
0&\cdots&1&0&0\\
0&\cdots&0&0&-t^{-1}
\end{bmatrix}.\text{\quad Then\ }
S^{-1}_t=\begin{bmatrix}
0&1&\cdots&0&0\\
\vdots&\vdots&\ddots&\vdots&\vdots\\
0&0&\ldots&1&0\\
t^{-2}&0&\Nts-t^{n-3}\cdots&-t&1\\
0&0&\cdots&0&-t
\end{bmatrix}
$$

$$\text{and\ }S_t^{-1}N S_t=\begin{bmatrix}
0&\Nts\Nts\Nts t^{n-1}\ \cdots&t^4&t^3&t^2&t\\
1&\cdots&0&0&0&0\\
\vdots&\ddots&\vdots&\vdots&\vdots&\vdots\\
0&\cdots&1&0&0&0\\
-t^{n-3}&\cdots&-t&1&0&0\\
0&\cdots&0&-t&0&0
\end{bmatrix}.
$$
So $S_t^{-1}NS_t\in D\cap\mc O_{\rm reg}$ for all $t\ne0$ and $\lim_{t\to0}S_t^{-1}NS_t$ exists and equals $N_{sr}$.
A simple computation shows that $\Stab_K(N_{sr})$ has dimension $2$ (see e.g. \cite[\S 3.1]{Jan2}), so $K\cdot N_{sr}$ has dimension $n^2-n-2$ and is therefore open
in the subregular nilpotent orbit. Since $D\cap\mc O_{\rm reg}$ is $K$-stable we get that the subregular
nilpotent orbit is contained in the closure of $D\cap\mc O_{\rm reg}$.

By the proof of Lemma~\ref{lem.varphi} we have $k[\mc N\sm D]=k[\mc N][d^{-1}]\cong k[\Mat_{n,n-1}][\ov d^{-1}]$ which is a unique factorisation domain.
So by \cite[Prop.~II.6.5(c)]{H2}, ${\rm CL}(\mc N)$ is generated by the class of $D$. The principal divisor $(d)$ is equal to $mD$ for some $m\ge1$.
Since the class of $D$ generates ${\rm CL}(\mc N)$, we must have $n|m$ by (i). On the other hand we have from our previous computations
$\pm d(S^{-1}NS)=s_{11}^n/\det(S)$. So the pull-back of $(d)$ along the orbit map $S\mapsto S^{-1}NS$ is $n$ times an irreducible divisor, and therefore $m\le n$.
A more direct proof which avoids the use of (i) goes as follows. The function $S\mapsto s_{11}/s_{12}$ is fixed under the left regular action of $K$ and
therefore descends to a regular function on $\mc O_{\rm reg}$.
So $\{S\in\GL_n\,|\,s_{11}=0\}$ occurs with coefficient $1$ in the pullback of the principal divisor of a rational function on $\mc O_{\rm reg}$
and is therefore equal to the pullback of $D$. It follows that $m=n$. Now one can deduce (i).

\noindent(iii).\ If $\chi=[(r),\lambda]$, then $\chi+\eta=[(r+n-1),\lambda+{\bf 1}_{n-1}]$, where ${\bf 1}_{n-1}$ is the all-one
vector of length $n-1$. Deleting the first column of a tableau is
a bijection from the semi-standard tableaux of shape $\lambda+{\bf 1}_{n-1}$ with entries in $\{1,\ldots,n-1\}$
to the semi-standard tableaux of shape $\lambda$ with entries in $\{1,\ldots,n-1\}$. So, by Theorem.~\ref{thm.basis}(i),
the two weight spaces have the same dimension.

\noindent(iv).\ If $f/d^t$, $f\in k[\mc N]$, is a $B$-semi-invariant of weight $\chi$, then $f$ is a $B$-semi-invariant of weight $\chi+t\eta$
which must be dominant and in the root lattice. So an $r$ as stated exists and we fix a choice for $r$. Now we may assume $t\ge r$.
Then $f/d^{t-r}\in k[\mc N]^U_{\chi+r\eta}$ by (iii). So $g\mapsto g/d^r$ is an isomorphism from $k[\mc N]^U_{\chi+r\eta}$
onto $k[\mc N][d^{-1}]^U_\chi$. The assertion now follows from Theorem.~\ref{thm.basis}(i).
\end{proof}

\begin{remgl}
If $T$ is a one-column tableau with entries in $\{1,\ldots,n-1\}$, then $u_T$, $\tilde u_T$, $v_T$ and $\tilde v_T$ have ``primitive" weights, i.e.
their weights are not the sum of two nonzero dominant weights in the root lattice. From this it easily follows that they are irreducible, see \cite[Lem.~3]{T}.
It follows from \cite[Lemme~2]{Moe} that the same holds for their restrictions to $\mc N$. In particular $d$ is irreducible.
Note that, by Proposition~\ref{prop.d}(i), $k[\mc N]$ is not a unique factorisation domain.
\end{remgl}

\noindent{\it Acknowledgement}. I would like to thank the referees for the suggestion to use the generalised Chevalley restriction map in the proof of Theorem~\ref{thm.matid} and for numerous corrections.

\bigskip

{\sc\noindent College of Engineering, Mathematics and Physical Sciences,\\
University of Exeter, EX4 4QF, Exeter, UK.\\
{\it E-mail address : }{\tt R.Tange@exeter.ac.uk}
}

\end{document}